\documentclass[11pt]{amsart}

\usepackage{amsmath, amsfonts, amssymb, amsthm, xcolor, mathtools, mathrsfs, graphicx, hyperref}

\newtheorem{theorem}{Theorem}[section]
\newtheorem{conjecture}[theorem]{Conjecture}
\newtheorem{corollary}[theorem]{Corollary}

\theoremstyle{definition}

\numberwithin{theorem}{section}
\numberwithin{equation}{section}

\begin{document}

\title[Fixed Perimeter Analogues of Results Related to Parity]{Fixed Perimeter Analogues of Several Partition Results Related to Parity}

\author{Gabriel Gray}
\address{University of Dayton}
\email{grayg1@udayton.edu}
\author{Emily Payne}
\address{Oregon State University}
\email{payneemi@oregonstate.edu}
\author{Holly Swisher}
\address{Oregon State University}
\email{swisherh@oregonstate.edu}
\author{Ren Watson}
\address{University of Texas at Austin}
\email{renwatson@utexas.edu}

\thanks{The authors were supported by NSF grant DMS-2101906}

\begin{abstract}
In 2016, Straub proved that Euler's classic partition identity holds true for partitions with largest hook (perimeter) $n$. This inspired further study of the relationship between classical partitions and fixed perimeter partitions. We extend the study of parity bias inequalities, first introduced by Kim, Kim, and Lovejoy in 2020, to the fixed perimeter setting and show using combinatorial methods that fixed perimeter analogues of many classical parity bias results can be proven and generalized. We also extend these methods to prove similar inequalities for the fixed perimeter analogues of PED and POD partitions. We additionally develop recursive formulas for the number of perimeter $n$ partitions with odd parts distinct and even parts unrestricted and with even parts distinct and odd parts unrestricted.
\end{abstract}

\maketitle

\section{Introduction and Statement of Results}

An \textit{integer partition} is a finite, nonincreasing sequence of positive integers called \textit{parts}.  If the parts of a partition $\pi$ sum to $n$, we call $\pi$ a \emph{partition of} $n$.  The well-known partition function $p(n)$ counts the number of partitions of $n$, and there are many interesting and well-studied variants of this function which can be described by $p(n\mid *)$, which counts the number of partitions of $n$ satisfying some condition $*$.

We will write a partition $\pi$ with parts $\pi_1\geq \pi_2 \geq \ldots \geq \pi_m$ either by $\pi=(\pi_1,\pi_2,\ldots,\pi_m)$, or $\pi_1 + \pi_2 + \cdots + \pi_m$.   One can visualize a partition $\pi$ with a \textit{Ferrers diagram}, in which each part $\pi_i$ is represented by a left-justified row of $\pi_i$ dots ordered from top to bottom.   

For any partition $\pi$, we let $\alpha_\pi$ denote the largest part and $\lambda_\pi$ the number of parts. We define the \emph{perimeter} of a partition $\pi$ to be $$\alpha_\pi+\lambda_\pi -1,$$ which is the number of dots along the top row and left column of the Ferrers diagram of $\pi$, i.e., the largest hook length of $\pi$.  For example, the following is the Ferrers diagram of the partition $5+5+3$ which has perimeter $7$. 

\begin{center}
    \begin{tabular}{c c c c c}
        \color{red} $\bullet$ & \color{red} $\bullet$ & \color{red} $\bullet$ & \color{red} $\bullet$ & \color{red} $\bullet$ \\
        \color{red} $\bullet$ & $\bullet$ & $\bullet$ & $\bullet$ & $\bullet$ \\
        \color{red} $\bullet$ & $\bullet$ & $\bullet$ &  & 
    \end{tabular} \\
\end{center} 

Analogous to $p(n)$, we define $r(n)$ to count the number of partitions of perimeter $n$.  We note that it is straightforward to confirm that $r(n)=2^{n-1}$ for $n \geq 0$.  We observe $p(4)=5$, since the partitions of $4$ are given by
\[
4, \enspace 3+1, \enspace 2+2, \enspace 2+1+1, \enspace 1+1+1+1, 
\]
and $r(4)=8$, since the partitions (of any $n$) that have perimeter $4$ are
\[
4, \enspace 3+1, \enspace 3+2, \enspace 3+3, \enspace 2+1+1, \enspace 2+2+1, \enspace 2+2+2, \enspace 1+1+1+1. 
\]

It is natural to ask whether identities that hold for partition functions of the form $p(n\mid *)$, which count partitions satisfying condition $*$ that have parts which sum to $n$, have fixed perimeter analogues in terms of the corresponding function $r(n\mid *)$, which count partitions satisfying condition $*$ that have perimeter $n$.  A priori there is no reason to believe that would be true.  However, in 2016 Straub \cite[Thm. 1.4]{straub2016core} showed that Euler's classic partition identity 
\begin{equation} \label{Euler}
p(n\mid \text{odd parts}) = p(n\mid \text{distinct parts})
\end{equation}
has a direct analogue for fixed perimeter partitions,
\begin{equation} \label{Euleranalogue}
    r(n\mid\text{odd parts})=r(n\mid\text{distinct parts}).
\end{equation}
This result has inspired further investigation into fixed perimeter analogues \cite{fu2018partitions, amdeberhan2022refinements, lin2022combinatorics, CHSS,   gray2026fixed}.

In this paper, we consider several recent partition results related to parity and investigate the analogous questions in the fixed perimeter setting.  While we focus here on parity, we note that bias for integer partitions with respect to congruences with other moduli have also been investigated \cite{BallantineFolsom, chern2022further}.

Throughout, we will use the notation $P(n\mid *)$ and $R(n\mid *)$ (and variations) to denote the sets of partitions counted by $p(n\mid *)$ and $r(n\mid *)$ (and variations), respectively.  For a set of positive integers $S$, let $p^S(n)$ denote the number of partitions of $n$ with no parts from $S$, and $r^S(n)$ denote the number of partitions with perimeter $n$ and no parts from $S$.   

We first consider the functions
\begin{align*}
    p_o(n)\coloneqq p(n \mid \text{more odd than even parts}),   \\
    p_e(n)\coloneqq p(n \mid \text{more even than odd parts}), 
\end{align*}
and the following refinements excluding parts from a set $S$, 
\begin{align*}    
    p_o^S(n)\coloneqq p(n \mid \text{more odd than even parts and no parts from } S),   \\
    p_e^S(n)\coloneqq p(n \mid \text{more even than odd parts and no parts from } S).     
\end{align*}

In 2020, Kim, Kim, and Lovejoy \cite[Thm. 1]{kim2020parity} proved that for all $n \neq 2$ (for which equality holds), 
\begin{align}\label{parityintro}
p_o(n) > p_e(n).
\end{align} 
This result naturally motivates the use of the term \textit{parity bias}. 

Let $S_0=\emptyset$ and $S_k=\{1,2,\ldots,k\}$ for positive integers $k$.  In 2022, Banerjee et al. \cite[Thms. 1.5, 1.7]{banerjee2022parity} considered how the parity bias in \eqref{parityintro} is impacted by the restriction of allowed parts of small size. In particular, they prove that
\begin{align}
    p_e^{S_1}(n)>p_o^{S_1}(n), \text{ for all } n\geq 8, \label{banerjee1}\\
    p_o^{S_2}(n)>p_e^{S_2}(n), \text{ for all } n\geq 9.  \label{banerjee2}
\end{align}
We note that these results align with one's intuitive expectation in this setting, as the smallest allowed part will appear more often overall, and this smallest part is even in the first case, and odd in the second case.  

Let $r_o(n)$, $r_e(n)$, $r_o^S(n)$, and $r_e^S(n)$ denote the fixed perimeter analogues of $p_o(n)$, $p_e(n)$, $p_o^S(n)$, and $p_e^S(n)$, respectively.  

In order to track fixed degrees of bias between even and odd parts, we make the following definitions.  For a partition $\pi$, set $o(\pi)$ and $e(\pi)$ to denote the number of odd and even parts in $\pi$, respectively.  We then define
\begin{equation*}
\omega(\pi) = o(\pi) - e(\pi),
\end{equation*}  
which tracks the amount of bias toward odd parts over even parts.  Let $P(n,m)$ be the set of partitions $\pi$ of $n$ with $\omega(\pi)=m$, and let $p(n,m)$ count the number of partitions in $P(n,m)$.  Let $R(n,m)$ and $r(n,m)$ denote the fixed perimeter analogues of $P(n,m)$ and $p(n,m)$, respectively.

Our first result is a fixed perimeter parity bias generalization of \eqref{parityintro}--\eqref{banerjee2} which uses $r(n,0)$ to specify the difference. 

\begin{theorem}\label{restrictedparts}
For $k\geq 0$, and $n \geq k+3$ or $n=k+1$, 
\begin{align*}
r_o^{S_k}(n) &> r_e^{S_k}(n), \text{ if $k$ even}, \\
r_e^{S_k}(n) &> r_o^{S_k}(n), \text{ if $k$ odd}.
\end{align*}
Specifically, for these $n$ we have $r_o^{S_k}(n) - r_e^{S_k}(n) = \pm r^{S_k}(n, 0)$.
For all other $n$ we have equality.
\end{theorem}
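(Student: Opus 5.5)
The plan is to reduce the statement to a symmetry of the refined counts $r^{S_k}(n,m)$ in $m$. The target identities are
\[
r^{S_k}(n,m)=r^{S_k}(n,1-m)\quad (k \text{ even}),\qquad r^{S_k}(n,m)=r^{S_k}(n,-1-m)\quad (k\text{ odd}),
\]
for all $m$ and all $n\ge k+3$. These suffice. For $k$ even, the map $m\mapsto 1-m$ sends $\{m\ge 1\}$ bijectively onto $\{m\le 0\}$, so
\[
r_o^{S_k}(n)=\sum_{m\ge1}r^{S_k}(n,m)=\sum_{m\le 0}r^{S_k}(n,m)=r_e^{S_k}(n)+r^{S_k}(n,0).
\]
For $k$ odd, $m\mapsto -1-m$ sends $\{m\le -1\}$ onto $\{m\ge 0\}$, and the same computation gives $r_e^{S_k}(n)=r_o^{S_k}(n)+r^{S_k}(n,0)$. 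Strict inequality then only needs $r^{S_k}(n,0)>0$. For $n\ge k+3$ the partition $(n-1,\,n-1)$ has perimeter $n$ and parts $>k$, and its $\omega$ is $\pm2$, so it is not a witness. Instead, use $(n-1,\,k+1)$ when $n-1\not\equiv k+1 \pmod 2$, and $(n-2,\,k+1,\,k+1,\ldots)$-type adjustments otherwise; a short case check should produce a partition with $\omega=0$.

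The small cases I would do by hand. For $n\le k$ there are no partitions at all. For $n=k+1$ the only partition is $(k+1)$, which is odd exactly when $k$ is even, so the required strict inequality holds. For $n=k+2$ the partitions are $(k+2)$ and $(k+1,k+1)$; their $\omega$ values are $\mp1$ and $\pm2$, so one is counted on each side and equality holds.

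For the symmetry itself I would parametrize a perimeter-$n$ partition in $R^{S_k}(n)$ by its largest part $\alpha\ge k+1$, with $\lambda=n+1-\alpha$, together with the multiset $F$ of the remaining $\lambda-1$ parts drawn from $\{k+1,\dots,\alpha\}$. Then $\omega(\pi)=\varepsilon(\alpha)+\omega(F)$, where $\varepsilon(\alpha)=\pm1$ according to the parity of $\alpha$. Pair the values $k+1\leftrightarrow k+2$, $k+3\leftrightarrow k+4,\dots$, and replace each element of $F$ by its partner.
\begin{itemize}
\item If $\alpha-k$ is even, every value in $\{k+1,\dots,\alpha\}$ has a partner, and swapping negates $\omega(F)$ while fixing $\alpha$.
\item If $\alpha-k$ is odd, the top value $\alpha$ (which has the parity of $k+1$) is unpaired. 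Then copies of $\alpha$ in $F$ must be handled separately, for instance by trading them for a change of the largest part $\alpha\leftrightarrow\alpha\pm1$ while adjusting $\lambda$ to keep the perimeter fixed.
\end{itemize}
The aim is a global involution, across different $\alpha$, that realizes $\omega\mapsto 1-\omega$ or $\omega\mapsto -1-\omega$. The shift by $\pm1$ is exactly the contribution of the smallest allowed part $k+1$, whose parity is what determines the sign in the statement.

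The main obstacle is making this involution close up across the different values of $\alpha$; the naive swap only negates $\omega(F)$ and leaves the $\varepsilon(\alpha)$ term unbalanced. If the combinatorics resists, I would fall back on generating functions. For fixed $\alpha$ with $O$ odd and $E$ even values in $[k+1,\alpha]$, the $z^{\omega}$-generator of the admissible $F$ is $[t^{\lambda-1}](1-zt)^{-O}(1-t/z)^{-E}$. One then sums $z^{\varepsilon(\alpha)}$ times this over $\alpha+\lambda=n+1$ and verifies directly that the total $G_n(z)$ satisfies $G_n(z)=z\,G_n(1/z)$ (for $k$ even) or $G_n(z)=z^{-1}G_n(1/z)$ (for $k$ odd), which is equivalent to the required symmetry of $r^{S_k}(n,m)$.
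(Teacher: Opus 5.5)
Your reduction is correct, and it has exactly the shape of the paper's argument. The identities $r^{S_k}(n,m)=r^{S_k}(n,1-m)$ ($k$ even) and $r^{S_k}(n,m)=r^{S_k}(n,-1-m)$ ($k$ odd) do yield $r_o^{S_k}(n)-r_e^{S_k}(n)=\pm r^{S_k}(n,0)$, and your checks for $n\le k+2$ are right. But that symmetry is the whole content of the theorem, and the proposal does not establish it.

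The fixed-$\alpha$ swap cannot deliver it. On a slice with $\alpha-k$ even it realizes $\omega\mapsto 2\varepsilon(\alpha)-\omega$. So each such slice is symmetric about $\varepsilon(\alpha)=\pm1$, and a finite nonempty distribution cannot also be symmetric about $\pm\tfrac12$. Concretely, for $k=0$, $n=3$ the classes with $\omega=0,1,-2,3$ are the singletons $\{(2,1)\}$, $\{(3)\}$, $\{(2,2)\}$, $\{(1,1,1)\}$. Any bijection realizing $\omega\leftrightarrow 1-\omega$ must therefore send $(2,1)\mapsto(3)$ and $(2,2)\mapsto(1,1,1)$, changing the largest part each time. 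The involution you need must move partitions between different values of $\alpha$. Your ``trade copies of $\alpha$'' step is left unspecified, and the generating-function fallback only restates the needed identity $G_n(z)=z^{\pm1}G_n(1/z)$ without computing anything.

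The missing idea is the paper's involution $\Theta$. You toggle the smallest allowed part $k+1$ while shifting every other part by one, so the largest part and the number of parts move in opposite directions. For $n\ge k+2$, define $\Theta$ on $R^{S_k}(n)$ as follows:
\begin{itemize}
\item if $k+1$ is not a part of $\pi$, subtract $1$ from every part and append a part $k+1$;
\item if $k+1$ is a part, delete one copy of it and add $1$ to each remaining part (at least one remains, since $n\ge k+2$).
\end{itemize}
The perimeter stays $n$, all parts stay $\ge k+1$, and $\Theta$ is an involution. Each shifted part flips parity, which negates $\omega$. The toggled part contributes $+1$ when $k$ is even and $-1$ when $k$ is odd. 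So $\omega(\Theta(\pi))=1-\omega(\pi)$ or $-1-\omega(\pi)$ respectively, which is precisely your target.

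In your generating-function language, let $A_n(z)$ be the $\omega$-polynomial of perimeter-$n$ partitions with all parts $\ge k+2$. Then $\Theta$ gives $G_n(z)=A_n(z)+z^{\pm1}A_n(1/z)$, which is visibly invariant under $G_n(z)\mapsto z^{\pm1}G_n(1/z)$.

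Finally, your witness for $r^{S_k}(n,0)>0$ is unfinished. The $(n-2,k+1,k+1)$ shape you suggest has three parts, hence odd $\omega$, since $\omega\equiv\lambda\pmod 2$. Just take $(n-1,n-2)$: it has perimeter $n$, parts $\ge k+1$ for $n\ge k+3$, and $\omega=0$.
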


We note that as special cases of Theorem \ref{restrictedparts} we obtain the following fixed perimeter analogues of \eqref{parityintro}--\eqref{banerjee2},
\begin{align*}
    r_o(n) &> r_e(n), \text{ for all } n\geq 3 \text{ and } n=1, \\
    r_e^{S_1}(n) &> r_o^{S_1}(n), \text{ for all } n\geq 4 \text{ and } n=2, \\
    r_o^{S_2}(n) &> r_e^{S_2}(n), \text{ for all } n\geq 5 \text{ and } n=3. 
\end{align*}

Let $S_2^- =\{2\}$ and $S_k^-=\{1,2,\ldots,k-2, k\}$ for positive integers $k\geq 3$.  Banerjee et al. \cite[Thm. 1.6]{banerjee2022parity} also proved that for all $n\geq 1$,
\begin{equation}\label{banerjee3}
p_o^{S_2^-}(n)>p_e^{S_2^-}(n).  
\end{equation}

We also show a fixed perimeter parity bias generalization of \eqref{banerjee3}.

\begin{theorem}\label{oneelementremoved}
For $k\geq 2$ and $n \geq k-1$, 
\begin{align*}
r_o^{S_k^-}(n) &> r_e^{S_k^-}(n), \text{ if $k$ even}, \\
r_e^{S_k^-}(n) &> r_o^{S_k^-}(n), \text{ if $k$ odd}.
\end{align*}
For all other $n$ we have a trivial equality.
\end{theorem}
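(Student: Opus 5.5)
The plan is to fix the largest part $\alpha$ and the number of parts $\lambda$, so that the perimeter $n=\alpha+\lambda-1$ is fixed. For each such pair I will build an injection from the partitions biased toward the "wrong" parity into those biased toward the "right" parity. Throughout, $k$ even and odd are handled together by tracking the parity of $k-1$. The parts allowed by $S_k^-$ are $k-1$ together with all integers $\geq k+1$, so the smallest allowed part $k-1$ has parity opposite to $k$. By analogy with Theorem \ref{restrictedparts}, the expected bias is toward the parity of $k-1$. I describe the case $k$ even, where that parity is odd; the odd case is identical with the roles of the parities exchanged.

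\emph{Step 1 (the pairing).} Pair the allowed values above $k-1$ as $(k+1,k+2),(k+3,k+4),\ldots$. Each pair contains one odd and one even value, and every value in it is allowed. Fix one copy of the largest part $\alpha$. Every other part lies in $\{k-1\}\cup[k+1,\alpha]$. If $\alpha$ is odd, its partner would be $\alpha+1$, which is too large, so I declare $\alpha$ unpaired. Thus the unpaired values are $k-1$ and possibly $\alpha$, and both are odd.

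\emph{Step 2 (the involution).} Let $\sigma$ swap every non-distinguished part lying in a pair with its partner. Then $\sigma$ preserves the number of parts. It also preserves the maximum, because $\alpha-1\mapsto\alpha$ is allowed when $\alpha$ is even, and $\alpha$ is fixed when it is odd. Hence $\sigma$ preserves the perimeter and avoids $S_k^-$. Write $\omega(\pi)=u+p$, where $p$ is the contribution of the swapped parts and $u$ is the contribution of the distinguished largest part and the unpaired parts. Then $\omega(\sigma\pi)=u-p$. Since all unpaired non-largest parts are odd, $u\geq 1$ whenever $\alpha$ is odd. In that case $\omega(\pi)<0$ forces $p<-u$, so $\omega(\sigma\pi)=u-p>2u>0$. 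This gives an injection from $R_e$ into $R_o$ on that slice. The same holds when $\alpha$ is even and at least one part equals $k-1$, since then $u\geq 0$, and one checks $\omega\leq -1\Rightarrow u-p\geq 1$.

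\emph{Step 3 (the main obstacle).} The remaining class has $\alpha$ even and no part equal to $k-1$, so $u=-1$. Here $\sigma$ sends $\omega=-1-p'$ to $-1+p'$, which fails exactly on the partitions with $p=0$, $\omega=-1$. It also fails for $\lambda=1$, where $\pi=(\alpha)$. For this class I would try a second injection that keeps $\alpha$ and $\lambda$ fixed. It replaces the smallest non-largest part, or a suitably chosen even part, by $k-1$. The image then has $u\geq 0$ and lands in a class already shown to have odd bias, and I must check that the image is disjoint from the image of Step 2 on that class. Failing that, I would directly compare counts on this slice. With $\alpha$ fixed, there are strictly more allowed odd values than even values in $[k-1,\alpha]$ (namely $k-1$ and $k+1$ versus $k+2$). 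Hence multisets with $j$ odd and $i$ even parts outnumber those with $i$ odd and $j$ even whenever $j>i$, by comparing $\binom{a+j-1}{j}\binom{b+i-1}{i}$ with its counterpart. I expect this bookkeeping to be the main difficulty.

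\emph{Step 4 (strictness and small $n$).} Exhibit an odd-biased partition outside the image, for instance $(\alpha,k-1,\ldots,k-1)$ with $\alpha$ allowed and odd, or an all-$(k-1)$ partition. This requires $n\geq k-1$. For $n<k-1$ no allowed partition of that perimeter exists, since every part is at least $k-1$. Both counts are then $0$, which gives the trivial equality.
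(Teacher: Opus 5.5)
Your plan to work slice by slice, with $\alpha$ and $\lambda$ fixed, cannot succeed. The inequality you need is false on individual slices, so no injection from the wrong-biased partitions into the right-biased ones can preserve $\alpha$.

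Take $k$ even.
\begin{itemize}
\item If $n\geq k+2$ is even, the slice $\alpha=n$, $\lambda=1$ contains only $(n)$, which has $\omega=-1$.
\item If $n\geq k+3$ is odd, the slice $\alpha=n-1$, $\lambda=2$ consists of the partitions $(n-1,x)$. These have $\omega=-1\pm 1\leq 0$, and those with $x$ even have $\omega=-2$.
\end{itemize}
For instance, with $k=2$ and $n=5$ the slice $\alpha=4$ is $\{(4,1),(4,3),(4,4)\}$, with $\omega=0,0,-2$. It contains an element of $R_e^{S_2^-}(5)$ and no element of $R_o^{S_2^-}(5)$. So the second injection you propose in Step 3 cannot exist; for $\lambda=1$ there is not even a non-largest part to replace. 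Your fallback of comparing counts on the slice gives the opposite inequality.

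Your binomial comparison of $(j,i)$ against $(i,j)$ overlooks the $-1$ contributed by the even largest part. That offset shifts the comparison to $j-i\geq 2$ versus $j-i\leq 0$. Relatedly, Step 3 undercounts where $\sigma$ fails. Besides $p=0$, the case $p=-1$ sends $\omega=-2$ to $\omega=0$; for example $(4,4)\mapsto(4,3)$ when $k=2$.

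The missing ingredient is a map that moves partitions between slices while keeping $\alpha+\lambda$ fixed. The paper's proof sets aside the parts equal to $k-1$. It then applies the involution $\Theta$ to the remaining partition $\pi'$, all of whose parts are $\geq k+1$:
\begin{itemize}
\item if $\pi'$ has no part $k+1$, decrease every part by $1$ and append a part $k+1$;
\item otherwise, delete one part $k+1$ and increase every other part by $1$.
\end{itemize}
Every surviving part of $\pi'$ changes parity, and one odd part $k+1$ is added or removed. Hence for $k$ even, $\omega(\Theta(\pi'))=1-\omega(\pi')$. Since the parts $k-1$ are odd, $\omega(\pi)\leq -1$ forces $\omega(\pi')\leq -1$, so the image has $\omega\geq 2$.

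For $k=2$ this sends $(4)\mapsto(3,3)$ at $n=4$ and $(4,4)\mapsto(3,3,3)$ at $n=5$. Both leave their slice, exactly where your $\sigma$ breaks down. The only partitions on which the map is undefined are $(k-1)^{n-k+2}$ and $(k+1,(k-1)^{n-k-1})$. Both are right-biased and supply strictness. The case $k$ odd is symmetric, with $\omega(\Theta(\pi'))=-1-\omega(\pi')$.
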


As a special case of Theorem \ref{oneelementremoved} we obtain the following fixed perimeter analogue of \eqref{banerjee3}.  Namely, for all $n\geq 1$, 
\[
r_o^{S_2^-}(n) > r_e^{S_2^-}(n). 
\]

Considering partitions with a fixed degree of bias between even and odd parts, Kim and Kim \cite[Thm. 1]{kim2023refined} showed that for all $n\geq 14$ and for $n=1,3,6,8,10,12$,
\begin{equation} \label{kimkimdegreeofbias}
p(n,1) > p(n,-1).
\end{equation}

We find that the direction of inequality in the fixed perimeter analogue of \eqref{kimkimdegreeofbias} depends on the parity of $n$.  In the following result we generalize this result to any choice of $m$. 

\begin{theorem}\label{degreeofbias}
Let $n \geq m \geq 1$. Then,
\begin{align*}
r(n,m) &> r(n,-m)  \text{ if } n\equiv m \!\!\! \pmod{2}, \\
r(n,-m) &> r(n,m)  \text{ if } n\not \equiv m \!\!\! \pmod{2}.
\end{align*}
\end{theorem}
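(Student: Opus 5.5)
The plan is to fix the largest part, reduce to a symmetric count over the remaining parts, and compare the two target values of $\omega$ by unimodality.

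\textbf{Parity reduction.} For any $\pi$ we have $\omega(\pi)=o(\pi)-e(\pi)\equiv o(\pi)+e(\pi)=\lambda_\pi \pmod 2$. So $\omega(\pi)=\pm m$ forces $\lambda_\pi\equiv m \pmod 2$. Since the perimeter is $n=\alpha_\pi+\lambda_\pi-1$, this gives $\alpha_\pi\equiv n+1-m \pmod 2$. Hence when $n\equiv m \pmod 2$, every partition in $R(n,m)\cup R(n,-m)$ has odd largest part. When $n\not\equiv m \pmod 2$, every such partition has even largest part. I will therefore fix the largest part $a$ and compare, for each admissible $a$, the number of partitions with $\alpha_\pi=a$ and $\omega=m$ against those with $\omega=-m$. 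A perimeter-$n$ partition with $\alpha_\pi=a$ is exactly the part $a$ together with an arbitrary multiset of $N=n-a$ parts from $\{1,\dots,a\}$.

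\textbf{Pairing and unimodality.} Pair the values $2i-1\leftrightarrow 2i$ for $1\le i\le h=\lfloor a/2\rfloor$. Among the $N$ remaining parts, let $k$ be the number taken from the odd values $\le 2h$ and $j$ the number from the even values. The number of such multisets is
\[
\binom{h+k-1}{k}\binom{h+j-1}{j}.
\]
This is symmetric under $k\leftrightarrow j$. For fixed $k+j$ it is a product of two log-concave sequences, hence log-concave and symmetric, hence unimodal with its peak at $k=j$. Writing $\omega'=k-j$ for the contribution of the paired parts, the count of a given $\omega'$ depends only on $|\omega'|$ and is weakly decreasing in $|\omega'|$, over the values of the correct parity.

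\textbf{The two cases.} If $a=2h$ is even (the case $n\not\equiv m$), there are no other values. Then $\omega=\omega'-1$, so we compare $\omega'=m+1$ against $\omega'=1-m$. Since $|1-m|<m+1$, the count with $\omega=-m$ is at least the count with $\omega=m$.

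If $a=2h+1$ is odd (the case $n\equiv m$), let $t\ge 0$ be the number of extra copies of $a$ among the remaining parts. Then $\omega=1+t+\omega'$, so we compare $\omega'=m-1-t$ against $\omega'=-m-1-t$. Since $|m-1-t|<m+1+t$, summing over $t$ and $a$ gives $r(n,m)\ge r(n,-m)$.

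\textbf{Strictness (the main obstacle).} I need one $a$, and one $t$ when $a$ is odd, where the count at the smaller $|\omega'|$ is positive and the count at the larger $|\omega'|$ is strictly smaller.

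In the odd case, take the largest admissible $a$, namely $a=n-m+1$, so that $N=m-1$. Take $t=N$, i.e.\ the partition $a^{m}$, which has $\omega=m$. Its mirror would need $\omega'=-2m$ with zero paired parts, which is impossible. So there is a strict surplus at $\omega=m$.

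In the even case, take $a$ as large as allowed, which forces $\lambda$ to be $m+1$ or so; this gives few remaining parts, so $|\omega'|\le N<m+1$ and the $\omega=m$ count is zero. Meanwhile the $\omega=-m$ count is positive, e.g.\ all $N$ remaining parts equal to $2$ with $N=m-1$, and $a\ge 2$ holds when $n\ge m$. This edge case needs care when $a=2h$ with $h=0$ is excluded. I expect the bookkeeping of admissible $(a,N)$ ranges in these boundary cases, rather than the unimodality itself, to be the step that needs the most attention.
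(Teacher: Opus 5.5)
Your proof is correct, and it reaches the result by a different comparison mechanism than the paper's.

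\textbf{What is shared.} Both arguments condition on the largest part $a$. Both use the same parity observation $\alpha_\pi \equiv n+1-m \pmod 2$ to see which parity of $a$ can occur.

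\textbf{The paper's route.} The paper counts the remaining $n-a$ parts by stars and bars over all $\lceil a/2\rceil$ odd and $\lfloor a/2\rfloor$ even values at once. This gives explicit sums of products of binomial coefficients for $r(n,m)$ and $r(n,-m)$, which are compared summand by summand through ratios.
\begin{itemize}
\item For $n\equiv m$, each summand of $r(n,m)$ is $\frac{(n+m)/2-k}{(n-m)/2-k}>1$ times the matching summand of $r(n,-m)$.
\item For $n\not\equiv m$, the ratio is $1$ at $k=1$ and $<1$ afterwards, and $r(n,-m)$ has one extra positive summand.
\item The case $n=m$ is handled separately.
\end{itemize}

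\textbf{Your route.} You additionally split off the multiplicity $t$ of the unpaired odd value $a$. What remains is drawn from a balanced set of $h$ odd and $h$ even values, on which the distribution of $\omega'$ is symmetric and, by log-concavity, unimodal. The inequalities then reduce to $|m-1-t|<m+1+t$ and $|1-m|<m+1$. Strictness comes from the witnesses $a^m$ and $(n-m+1,2^{m-1})$.

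\textbf{What each buys.} The paper's route produces explicit closed formulas for $r(n,m)$ and quantitative ratios between corresponding terms. Yours is more conceptual: it shows the bias comes entirely from the forced largest part and, when $a$ is odd, the unpaired value $a$. It also treats $n=m$ uniformly and avoids the index bookkeeping. The price is that you get only blockwise weak inequalities and need separate strictness witnesses.

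\textbf{Two cosmetic fixes.}
\begin{itemize}
\item In the even case, the block you actually use is $a=n-m+1$ with $\lambda_\pi=m$, not ``$m+1$ or so''. Also, $a\ge 2$ holds there because $n\not\equiv m$ together with $n\ge m$ forces $n\ge m+1$.
\item For $a=1$ (which occurs only when $n=m$), count the empty multiset directly rather than through $\binom{h+k-1}{k}$ with $h=0$. Under the paper's stated convention, $\binom{-1}{0}$ would be $0$.
\end{itemize}
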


As a special case of Theorem \ref{degreeofbias} we obtain the following fixed perimeter version of \eqref{kimkimdegreeofbias}.  Namely, 
\begin{align*}
r(n,1) &> r(n,-1), \text{ for all } n\geq 1 \text{ odd}, \\
r(n,-1) &> r(n,1), \text{ for all } n\geq 2 \text{ even}.
\end{align*}

It is natural to also consider parity bias in the context of partitions into distinct parts.  Let
\begin{align*}
    pd_o(n)\coloneqq p(n \mid \text{distinct parts and more odds than evens}),   \\
    pd_e(n)\coloneqq p(n \mid \text{distinct parts and more evens than odds}), 
\end{align*}
and let $rd_o(n)$ and $rd_e(n)$ denote the fixed perimeter analogues of $pd_o(n)$ and $pd_e(n)$, respectively.

Originally conjectured by Kim, Kim, and Lovejoy in \cite{kim2020parity}, Banerjee et al. \cite[Thm 1.4]{banerjee2022parity} prove the following parity bias result for partitions into distinct parts.  Namely, for all $n \geq 20$,
\begin{equation}\label{banerjee4}
pd_o(n)>pd_e(n).
\end{equation}
While we have not proven a fixed perimeter analogue of \eqref{banerjee4}, we make the following conjecture which has been verified up to $n=150$. 

\begin{conjecture}\label{oddevendistconj}
For all $n \geq 9$,
\[
rd_o(n) > rd_e(n).
\]
\end{conjecture}

\noindent We note that for $n\leq 8$, the inequality in Conjecture \ref{oddevendistconj} holds for odd $n$ but not for even $n$.

Lastly, we consider bias when counting partitions into parts requiring parts of a certain parity to be distinct but allowing parts of the other parity to be unrestricted.  Let
\begin{align*}
    ped(n)\coloneqq p(n \mid \text{even parts distinct}),   \\
    pod(n)\coloneqq p(n \mid \text{odd parts distinct}). 
\end{align*}
These partition counting functions have been widely studied \cite{andrews2009partitions, andrews2025identities, andrews2010arithmetic, ballantine20234, ballantine2025inclusion, ballantine2023ped, ballantine2024generalizations, merca2017new, sellers2025explaining, toh2012ramanujan}. 

In particular, it follows from Ballantine and Welch \cite[Thm. 1.5]{ballantine2023ped} that for all $n\geq 2$,
\begin{equation}\label{eq:pedpod}
ped(n) > pod(n).
\end{equation}

We further refine these functions by excluding parts from a set $S$,
\begin{align*}
    ped^S(n)\coloneqq p(n \mid \text{even parts distinct and no parts from } S),   \\
    pod^S(n)\coloneqq p(n \mid \text{odd parts distinct and no parts from } S). 
\end{align*}
Let $red(n)$, $rod(n)$, $red^S(n)$, and $rod^S(n)$ denote the fixed perimeter analogues of $ped(n)$, $pod(n)$, $ped^S(n)$, and $pod^S(n)$, respectively.  

We obtain results for these functions analogous to Theorems \ref{restrictedparts} and \ref{oneelementremoved}.  

\begin{theorem}\label{restrictedpartsdist}
For $k\geq 0$, and $n \geq k+4$ or $n=k+2$, 
\begin{align*}
red^{S_k}(n) &> rod^{S_k}(n), \text{ if $k$ even}, \\
rod^{S_k}(n) &> red^{S_k}(n), \text{ if $k$ odd}.
\end{align*}
For all other $n$ we have equality.
\end{theorem}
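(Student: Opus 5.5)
The first step is to reduce to $k=0$ and $k=1$ by a shift. Subtracting $k$ from every part of a partition with all parts $\geq k+1$ is a bijection onto unrestricted partitions. It preserves the number of parts and lowers the largest part by $k$, so it lowers the perimeter by exactly $k$. It also flips the parity of every part when $k$ is odd and preserves parity when $k$ is even. Hence, with $m=n-k$,
\[
red^{S_k}(n)=red(m),\quad rod^{S_k}(n)=rod(m)\ (k \text{ even}); \qquad red^{S_k}(n)=rod(m),\quad rod^{S_k}(n)=red(m)\ (k \text{ odd}).
\]
Under this substitution the thresholds $n\geq k+4$ and $n=k+2$ become $m\geq 4$ and $m=2$. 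So it suffices to prove that $red(m)>rod(m)$ for $m\geq 4$ and for $m=2$, and that $red(m)=rod(m)$ for $m\in\{0,1,3\}$.

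Next I would split by the largest part $a$. A partition of perimeter $m$ with largest part $a$ has $\lambda=m-a+1$ parts, all in $\{1,\dots,a\}$, with at least one equal to $a$. If the distinct parity class contains $a$, then $a$ occurs exactly once. The remaining $N=m-a$ parts then form a multiset from $\{1,\dots,a-1\}$ with the same distinctness rule. Otherwise they form such a multiset from $\{1,\dots,a\}$. A multiset of size $N$ from an interval containing $E$ even and $O$ odd values is counted by $[x^N]\,(1+x)^E(1-x)^{-O}$ when evens are distinct, and by $[x^N]\,(1+x)^O(1-x)^{-E}$ when odds are distinct. Write $F_a(x)$ for the generating function of the red-type partitions with largest part $a$ (indexed by $N$), and $G_a(x)$ for the rod-type ones.

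I would compare $F_a$ and $G_a$ one $a$ at a time. If $a$ is even, red forces the rest into $\{1,\dots,a-1\}$, which has $a/2$ odd and $a/2-1$ even values. Rod allows the rest in $\{1,\dots,a\}$, which has $a/2$ odd and $a/2$ even values. This gives
\[
F_a=(1+x)^{a/2-1}(1-x)^{-a/2}, \qquad G_a=(1+x)^{a/2}(1-x)^{-a/2},
\]
so $G_a=(1+x)F_a$. If $a$ is odd, the roles reverse and $F_a=(1+x)G_{a}$ by the same computation. The per-$a$ comparison therefore alternates in sign, so the argument must sum over $a$ with $N=m-a$ and compare $[x^{m-a}]$ across different $a$. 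Using $[x^N](1+x)H=[x^N]H+[x^{N-1}]H$, I expect the contribution of even $a$ to telescope against the contribution of $a+1$. Concretely, I would write $red(m)-rod(m)$ as a sum over consecutive pairs $(a,a+1)$ with $a$ even. Each pair should reduce to a difference of coefficients of $(1+x)^{j}(1-x)^{-j-1}$ and $(1+x)^{j}(1-x)^{-j}$, whose quotient $1/(1-x)$ is coefficientwise nonnegative.

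The main obstacle is exactly this bookkeeping: choosing the pairing of $a$ values and handling the boundary terms $a=1$, $a=m$ and $a=m-1$ so that every grouped term is nonnegative. I also need at least one term strictly positive once $m\geq 4$ or $m=2$, and all terms zero for $m=0,1,3$. If the telescoping does not close cleanly, my fallback is a combinatorial injection $R(m\mid\text{odd distinct})\to R(m\mid\text{even distinct})$. This injection would fix $\lambda$ and trade a largest part $a$ for $a\pm1$ together with a single removed or added part. I would finish by checking the small cases $m\leq 4$ directly from the lists of perimeter-$m$ partitions; for instance, at $m=3$ both counts should equal $3$.
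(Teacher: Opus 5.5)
\textbf{Verdict: there is a genuine gap.} Your shift reduction is correct and useful. Subtracting $k$ from every part keeps the number of parts, lowers the perimeter by $k$, and preserves or flips all parities according to the parity of $k$. So the whole theorem reduces to comparing $red(m)$ with $rod(m)$, i.e.\ Corollary~\ref{redrod} plus equality at $m=1,3$; the paper instead runs its involution $\Theta$ directly on $R^{S_k}(n)$ for each $k$. The gap is that you never carry out that comparison, and the plan you describe fails at two specific points.

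First, the odd-$a$ relation is wrong. For $a=2j+1$, the red-type pool $\{1,\dots,a\}$ has $j+1$ unrestricted odd values and $j$ distinct even values, while the rod-type pool $\{1,\dots,a-1\}$ has $j$ of each. Hence $F_{2j+1}=(1+x)^j(1-x)^{-j-1}$ and $G_{2j+1}=(1+x)^j(1-x)^{-j}$, i.e.\ $F_a=G_a/(1-x)$, not $(1+x)G_a$. Already at $a=1$ you have $F_1=(1-x)^{-1}$ (the partitions $1^{N+1}$) against $G_1=1$. The roles do not simply reverse: the asymmetry, which comes from $1$ being odd, is exactly the source of the bias.

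Second, pairing each even $a$ with $a+1$ does not give nonnegative grouped terms, and not only at the boundary. For odd $m$ the pair $(m-1,m)$ contributes $-1$, and for $m=6$ the pair $(4,5)$ contributes $-3+1=-2$. Your fallback injection is not well defined as stated either, since keeping $\lambda$ fixed while changing the largest part by $\pm1$ changes the perimeter.

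\textbf{Repair via generating functions.} Pair each odd $a=2j+1$ with $a+1$ instead. Since $F_{2j+1}=F_{2j+2}=H_j:=(1+x)^j(1-x)^{-j-1}$ and $G_{2j+2}=(1+x)H_j$, the pair contributes $[x^{m-2j-2}]H_j-[x^{m-2j-3}]H_j=[x^{m-2j-2}](1+x)^j(1-x)^{-j}\ge 0$. Therefore $red(m)-rod(m)=\sum_{j\ge 0}[x^{m-2j-2}](1+x)^j(1-x)^{-j}$. This sum is $0$ for $m=1,3$ and equals $1$ for $m=2$. For $m\ge 4$ it is at least its $j=1$ term $[x^{m-4}](1+x)(1-x)^{-1}\ge 1$.

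\textbf{Repair via the paper's bijection.} Alternatively, use the paper's bijective route at $k=0$. Take the involution that, if $1$ is not a part, subtracts $1$ from every part and appends a $1$, and otherwise deletes one $1$ and adds $1$ to every remaining part. It preserves perimeter and flips the parity of every part other than the inserted or deleted $1$, so it maps $ROD(m)$ injectively into $RED(m)$. For $m\ge 4$ it misses $(m-1,2)$, whose image $(m-2,1,1)$ is not in $ROD(m)$. The Tribonacci recurrences in the paper give a third way to finish.
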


\begin{theorem}\label{oneelementremoveddist}
For $k\geq 2$ and $n \geq k$, 
\begin{align*}
red^{S_k^-}(n) &> rod^{S_k^-}(n), \text{ if $k$ even}, \\
rod^{S_k^-}(n) &> red^{S_k^-}(n), \text{ if $k$ odd}.
\end{align*}
\end{theorem}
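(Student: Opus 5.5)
The plan is to mirror the proof of Theorem \ref{oneelementremoved}. I build an explicit injection between the two sets of fixed perimeter partitions that preserves the largest part $\alpha_\pi$ and the number of parts $\lambda_\pi$, and hence the perimeter. I then exhibit a partition outside the image to get strictness. I treat $k$ even; the odd case is symmetric, with the roles of ``odd distinct'' and ``even distinct'' exchanged.

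\textbf{Setup for $k$ even.} The allowed parts are $k-1$ and all integers $\geq k+1$. The smallest allowed part $k-1$ is odd and stands alone. The remaining allowed parts split into consecutive pairs $(k+2i-1,\,k+2i)$ for $i\geq 1$, each consisting of one odd and one even part. Let $\phi$ be the involution on allowed parts that swaps $k+2i-1\leftrightarrow k+2i$ and fixes $k-1$.

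\textbf{The map.} Given $\pi\in ROD^{S_k^-}(n)$, keep one copy of the largest part $\alpha=\alpha_\pi$ fixed and apply $\phi$ to every other part. The number of parts is preserved, so the perimeter is preserved as long as no part exceeds $\alpha$ after the swap. The multiplicities of the paired parts are exchanged, so ``odd parts distinct'' becomes ``even parts distinct''. The part $k-1$, which has multiplicity at most $1$ in $\pi$, stays put, and this is allowed in $RED^{S_k^-}$. The case $\alpha=k-1$ is trivial: every part equals $k-1$, so such partitions are counted on both sides with at most one part in $ROD$.

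\textbf{The boundary problem, which is the main obstacle.} Trouble arises only at the top pair. Suppose $\alpha=k+2i-1$ is odd. In $\pi$ the odd part $\alpha$ is distinct, so all other parts lie below $\alpha$ and $\phi$ keeps them below $\alpha$. The image is then a legitimate $RED$ partition with largest part $\alpha$. Now suppose $\alpha=k+2i$ is even. Then $\pi$ may contain further copies of $\alpha$ together with at most one copy of $k+2i-1$. Under $\phi$, that single copy of $k+2i-1$ becomes a second copy of the even part $\alpha$, which violates even-distinctness in the image.

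To handle this, for even $\alpha$ I leave the top pair unswapped: copies of $\alpha$ and of $\alpha-1$ are kept as they are, and $\phi$ is applied only to parts $<\alpha-1$. What remains is to check that the resulting images from the two parities of $\alpha$ still land in the target set and stay disjoint. If leaving the top pair unswapped breaks even-distinctness when there are repeated copies of $\alpha$, I would instead compare counts for fixed $(\alpha,\lambda)$ directly. That means comparing coefficients of the multiset polynomials $\prod(1+q)$ and $\prod (1-q)^{-1}$ over the paired parts below $\alpha$, and using the pairing to match factors one at a time. The fallback then reduces the whole problem to the single unpaired factor at $k-1$, where $RED$ has the factor $(1-q)^{-1}$ and $ROD$ has the factor $(1+q)$.

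\textbf{Strictness.} For $n\geq k$, the partition with largest part $\alpha=n-1$ (or the partition $(k-1,k-1)$ when $n=k$) together with two or more copies of $k-1$ lies in $RED^{S_k^-}(n)$ but not in the image, because every image partition has at most one part equal to $k-1$. When $n=k$ the largest allowed choice is $\alpha=k-1$ with $\lambda=2$, and this is exactly why the hypothesis $n\geq k$ is needed. This gives $red^{S_k^-}(n)>rod^{S_k^-}(n)$.
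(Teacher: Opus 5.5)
Your approach has a gap that cannot be patched. No injection $ROD^{S_k^-}(n)\hookrightarrow RED^{S_k^-}(n)$ preserving both $\alpha_\pi$ and $\lambda_\pi$ exists, because for fixed $(\alpha,\lambda)$ the inequality is sometimes reversed. Take $k$ even, any even $\alpha\geq k+2$, and $\lambda=2$ (perimeter $\alpha+1$). Every $(\alpha,x)$ with $x<\alpha$ allowed lies in both sets, but $(\alpha,\alpha)$ lies in $ROD^{S_k^-}$ and not in $RED^{S_k^-}$. For $k=2$, $n=5$ this is $\{(4,1),(4,3),(4,4)\}$ against $\{(4,1),(4,3)\}$. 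So the boundary problem at an even top part is not a technicality.

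Your fallback fails for the same reason. For $\alpha=k+2i$, the generating functions in the number of non-top parts are $(1-q)^{-(i+1)}(1+q)^{i-1}$ for $RED$ and $(1+q)^{i+1}(1-q)^{-i}$ for $ROD$. The lower pairs do match. The top pair $(k+2i-1,k+2i)$, however, contributes $(1-q)^{-1}$ versus $(1+q)(1-q)^{-1}$ rather than cancelling. The difference is $(1-q)^{-i}(1+q)^{i-1}[(1-q)^{-1}-(1+q)^2]$, whose coefficient of $q$ is $-1$, so the comparison does not reduce to the factor at $k-1$. The deficit at even largest parts is made up by the surplus at odd largest parts. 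For example, at $\alpha=k+1$, $\lambda=3$ there are three $RED$ partitions and no $ROD$ partition. Any correct injection must therefore move partitions between different largest parts.

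That is exactly what the paper's involution $\Theta^-$ does. It keeps the copies of $k-1$ and acts on the remaining parts, all of which are $\geq k+1$. If $k+1$ is a part, it deletes one copy and adds $1$ to every other part; otherwise it subtracts $1$ from every part and appends a part $k+1$. This sends $(\alpha,\lambda)$ to $(\alpha\pm1,\lambda\mp1)$, so the perimeter is preserved. It flips the parity of every part $\geq k+1$ and inserts or removes only one copy of the odd part $k+1$. Hence for $k$ even it carries distinct odd parts to distinct even parts; for instance $(4,4)\mapsto(3,3,3)$ above.

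The map is defined except at $\pi_0=(k-1)^{n-k+2}$ and $\pi_1=(k+1,(k-1)^{n-k-1})$. For $k$ even and $n\geq k+3$, these lie in $RED^{S_k^-}(n)$ but not $ROD^{S_k^-}(n)$ and are fixed, which gives strictness. The cases $n=k,k+1,k+2$ are checked by hand.

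Your strictness idea, that images contain at most one part $k-1$, is correct for $\Theta^-$, but the witness should be $\pi_0$. A partition with largest part $n-1$ has only two parts, so it cannot also contain two copies of $k-1$.
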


Letting $k=0$ in Theorem \ref{restrictedpartsdist} gives the following immediate corollary which is an analogue to \eqref{eq:pedpod}.

\begin{corollary}\label{redrod}
For all $n \geq 4$ and $n=2$, 
\begin{align*}
red(n) > rod(n).
\end{align*}
\end{corollary}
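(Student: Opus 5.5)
The statement to prove is Corollary \ref{redrod}. The plan is to specialize Theorem \ref{restrictedpartsdist} to $k=0$. With $S_0=\emptyset$, the condition ``no parts from $S_0$'' is vacuous. Hence $red^{S_0}(n)=red(n)$ and $rod^{S_0}(n)=rod(n)$ for every $n$, directly from the definitions of these fixed perimeter counting functions.

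The range hypotheses also match. For $k=0$, the condition ``$n \geq k+4$ or $n=k+2$'' in Theorem \ref{restrictedpartsdist} reads ``$n\geq 4$ or $n=2$'', which is exactly the range in the corollary. Since $k=0$ is even, the theorem gives $red^{S_0}(n) > rod^{S_0}(n)$ for all such $n$, and substituting the identifications above yields $red(n)>rod(n)$.

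There is essentially no obstacle here. The only points to check are that $k=0$ is allowed in Theorem \ref{restrictedpartsdist}, which it is since $k\geq 0$, and that the parity case used is the even one. As an optional sanity check I would list the partitions of perimeter $2$: these are $2$ and $1+1$. Both have distinct even parts, but $1+1$ repeats an odd part, so $red(2)=2>1=rod(2)$. This confirms the small exceptional value.
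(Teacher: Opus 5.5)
Your proposal is correct and matches how the paper obtains the corollary: set $k=0$ in Theorem \ref{restrictedpartsdist}, so that $S_0=\emptyset$, $k$ is even, and the range becomes $n\geq 4$ or $n=2$. The paper also gives a second, independent proof using the Tribonacci recurrences \eqref{redrecurrence} and \eqref{rodrecurrence}, with base values $red(1)=rod(1)=1$, $red(2)=2>rod(2)=1$, and $red(3)=rod(3)=3$.
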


Let $rd(n)$ count the number of partitions with perimeter $n$ into distinct parts. When Straub \cite{straub2016core} proved \eqref{Euleranalogue}, he showed that in fact
\[
rd(n) = F_n,
\]
where $F_n$ is the $n$th Fibonacci number ($F_0=0$, $F_1=1$, and $F_{n+2}=F_{n+1}+F_n$ for all $n\geq 0$).  Letting $rd^S(n)$ count the number of partitions with perimeter $n$ into distinct parts not in $S$, we show in Section \ref{Fib} that Straub's proof extends to show that for any $k\geq 0$ and $n\geq k+1$,
\begin{equation}\label{Straubextn}
rd^{S_k}(n) = F_{n-k}.
\end{equation}

Moreover, we further show in Section \ref{Trib} that the $red$ and $rod$ functions satisfy Tribonacci recurrence relations ($T_{n+3}=T_{n+2}+T_{n+1}+T_n$ for all $n\geq 0$).  

We now outline the remainder of this paper.  In Section \ref{sec:restrictionofallowedparts}, we prove Theorems \ref{restrictedparts} and \ref{oneelementremoved}. In Section \ref{sec:degreeofbias}, we prove Theorem \ref{degreeofbias}.  In Section \ref{sec:redrod}, we prove Theorems \ref{restrictedpartsdist} and \ref{oneelementremoveddist} as well as \eqref{Straubextn} and the Tribonacci recurrence relations for $red(n)$ and $rod(n)$.

\section{Parity Bias for Partitions with Restriction of Allowed Parts}\label{sec:restrictionofallowedparts}

For $k\geq 1$, let $r_k(n)$ count the number of partitions with perimeter $n$ and smallest part equal to $k$, and let $R_k(n)$ denote the set of partitions counted by $r_k(n)$.  Before we prove Theorem \ref{restrictedparts} we introduce a useful bijection.

For $n\geq k+1$, we define the map $$\varphi_k: R_k(n) \rightarrow R^{S_k}(n)$$ by assigning $\varphi_k(\pi)$ to be the partition obtained from $\pi \in R_k(n)$ by deleting one part of size $k$  and increasing all remaining parts by $1$.  Since $n\geq k+1$, there must be a part remaining after deleting a part of size $k$, so this is well-defined.  The perimeter of $\varphi_k(\pi)$ remains $n$, and the parts of $\varphi_k(\pi)$ are all $\geq k+1$.  Thus $\varphi_k(\pi)\in R^{S_k}(n)$.  We construct an inverse by defining $$\psi_k: R^{S_k}(n) \rightarrow R_k(n)$$ to reduce the size of all parts by $1$ and then add a part of size $k$.  Again, since $n\geq k+1$ this is well-defined.  As $\varphi_k$ and $\psi_k$ are inverses, we deduce that for all $k\geq 1$ and $n\geq k+1$,
\begin{equation}\label{bigbijection}
r_k(n) = r^{S_k}(n).
\end{equation}

Observing that for $k\geq 1$, $r_k(n)=r^{S_{k-1}}(n)-r^{S_k}(n)$, we obtain that for all $k\geq 0$,
\begin{equation}\label{bijectioncor}
r^{S_k}(n) = 2r^{S_{k+1}}(n).
\end{equation}
Since $r^{S_{n-1}}(n)=1$, \eqref{bijectioncor} gives a refinement of the fact that $r(n)=2^{n-1}$, which was shown by Fu and Tang in \cite[Cor. 2.4]{fu2018partitions}.

As $R^{S_k}(n) = R^{S_{k+1}}(n) \sqcup R_{k+1}(n)$, putting the maps $\psi_{k+1}$ and $\varphi_{k+1}$ together, yields the following useful bijection for $k\geq 0$ and $n\geq k+2$,
\[
\Theta: R^{S_k}(n) \rightarrow R^{S_k}(n), 
\]
defined by 
\begin{equation}\label{eq:Thetadef}
\Theta(\pi) = 
\begin{cases}
\psi_{k+1}(\pi) & \text{ if } \pi \in R^{S_{k+1}}(n), \\
\varphi_{k+1}(\pi) & \text{ if } \pi \in R_{k+1}(n).
\end{cases}
\end{equation}
We observe that $\Theta^{-1}=\Theta$, so $\Theta$ is an involution.

In order to consider how $\Theta$ behaves on parity bias partitions, we make the following definitions.  Recall $\omega(\pi)$ tracks the amount of bias toward odd parts over even parts in a partition $\pi$.  For a set $S$ we define
\[
R^S(n,m) = \{ \pi \in R(n) \mid \omega(\pi)=m \text{ and no parts come from } S \},
\]
and $r^S(n,m)$ to be the number of partitions in $R^S(n,m)$.  Then, 
\begin{align}
R_o^S(n) &= \bigsqcup_{m\geq 1} R^S(n,m), \label{eq:oddrefinement} \\
R_e^S(n) &= \bigsqcup_{m\geq 1} R^S(n,-m). \label{eq:evenrefinement}
\end{align}

For $k\geq 1$, we can restrict $\psi_k$ to $R^{S_k}(n,m)$, which yields a bijection
\begin{equation}\label{eq:psi_kbij}
\psi_k: R^{S_k}(n,m) \rightarrow
\begin{cases}
R_k(n, -m-1) & \text{ if $k$ even}, \\
R_k(n, -m+1) & \text{ if $k$ odd}.
\end{cases} 
\end{equation}
This is because if $\omega(\pi) - o(\pi) - e(\pi) =m$, reducing the parts of $\pi$ by $1$ changes the parity of each part and thus reverses the sign on $m$.  Then adding a part of size $k$ alters the parity bias by $\pm 1$ depending on whether $k$ is odd or even.  The restriction of $\varphi_k$ to $R_k(n, -m-1)$ if $k$ is even, or to $R_k(n, -m+1)$ if $k$ is odd, is an inverse of $\psi_k$.  Thus, 
\begin{equation}\label{eq:varphi_kbij}
\varphi_k: R_k(n,m) \rightarrow
\begin{cases}
R^{S_k}(n, -m-1) & \text{ if $k$ even}, \\
R^{S_k}(n, -m+1) & \text{ if $k$ odd},
\end{cases} 
\end{equation}
and we have that for each $k\geq 1$, $n\geq k+1$, and $m\in \mathbb{Z}$, 
\begin{align*}
r^{S_k}(n,m) &= r_k(n, -m-1),  \text{ if $k$ even}, \\
r^{S_k}(n,m) &= r_k(n, -m+1),  \text{ if $k$ odd}.
\end{align*} 
We note that for all but finitely many $m$, these values will be $0$ as the corresponding sets will be empty.

Together, \eqref{eq:psi_kbij} and \eqref{eq:varphi_kbij} give that $\Theta$ restricts to a bijection 
\begin{equation}\label{eq:Thetarestriction}
\Theta : R^{S_k}(n,m) \rightarrow 
\begin{cases}
R^{S_k}(n, -m+1) & \text{ if $k$ even}, \\
R^{S_k}(n, -m-1) & \text{ if $k$ odd}.
\end{cases} 
\end{equation}
for any $k\geq 0$, $n\geq k+2$, and $m\in \mathbb{Z}$.  Here the parity conditions have reversed since $R^{S_k}(n,m) = R^{S_{k+1}}(n,m) \sqcup R_{k+1}(n,m)$.

We now prove Theorem \ref{restrictedparts} using the function $\Theta$.

\begin{proof}[Proof of Theorem \ref{restrictedparts}]
Let $k\geq 0$.  We first note that the partition $k+1$ is the only partition with perimeter $k+1$ that contains no parts from $S_k$.  Thus,
\[
r_o^{S_k}(k+1) = 
\begin{cases}
1 & \text{if $k$ even}, \\
0 & \text{if $k$ odd},
\end{cases}
\]
and so the result holds when $n=k+1$.  

We also note that there are exactly two partitions with perimeter $k+2$ that have no parts in $S_k$, these are $k+2$, and $(k+1, k+1)$.  Since the parities of these parts are opposite, it follows that $r_o^{S_k}(k+2) = r_e^{S_k}(k+2)$ for $k\geq 0$ of any parity.  Moreover, if $n\leq k$ we trivially have $r_o^{S_k}(k+2) = r_e^{S_k}(k+2) =0$. 

We now suppose $n\geq k+3$.  We observe that the partition $\pi = (n-1, n-2)$ of perimeter $n$ has one even and one odd part with each part $\geq k+1$.  Thus for all $k\geq 0$, and $n\geq k+3$, 
\begin{equation}\label{eq:nonempty}
R^{S_k}(n,0) \neq \emptyset.
\end{equation}
We will use \eqref{eq:nonempty} to prove the result.

Consider first the case when $k\geq 0$ is even.  Let $\Theta_o$ be the restriction of $\Theta$ to $R_o^{S_k}(n)$.

From \eqref{eq:oddrefinement}, \eqref{eq:psi_kbij}, and \eqref{eq:varphi_kbij}, since $k$ is even (and thus $k+1$ is odd), we have that $\Theta_o$ is a bijection onto
\begin{align*}
\Theta_o(R_o^{S_k}(n)) 
&= \bigsqcup_{m\geq 1} \left( R^{S_{k+1}}(n, -m+1)  \bigsqcup R_{k+1}(n, -m+1) \right) \\
& = \bigsqcup_{m\geq 1}  R^{S_k}(n, -m+1) \\
& = R^{S_k}(n, 0) \sqcup R_e^{S_k}(n).
\end{align*}
It thus follows that
\[
r_o^{S_k}(n) = r_e^{S_k}(n) + |R^{S_k}(n, 0)|,
\]
and from \eqref{eq:nonempty} we have that $r_o^{S_k}(n) > r_e^{S_k}(n)$ in this case.

We now consider the case when $k\geq 1$ is odd.  Similar to the even case, we define $\Theta_e$ to be the restriction of $\Theta$ to $R_e^{S_k}(n)$.

Since $k$ is now odd (and $k+1$ is even), we have from \eqref{eq:evenrefinement}, \eqref{eq:psi_kbij}, and \eqref{eq:varphi_kbij}, that $\Theta_e$ is a bijection onto
\begin{align*}
\Theta_e(R_e^{S_k}(n)) &= \bigsqcup_{m\geq 1} \left( R^{S_{k+1}}(n, m-1)  \bigsqcup R_{k+1}(n, m-1) \right) \\
& = R^{S_k}(n, 0) \sqcup R_o^{S_k}(n).
\end{align*}
Thus,
\[
r_e^{S_k}(n) = r_o^{S_k}(n) + |R^{S_k}(n, 0)|,
\]
and from \eqref{eq:nonempty} we also have that $r_e^{S_k}(n) > r_o^{S_k}(n)$ in this case.
\end{proof}

Before we prove Theorem \ref{oneelementremoved} we establish an involution on $R^{S_k^-}(n)$.  For a partition $\pi \in R^{S_k^-}(n)$, define $\lambda_{k,\pi} \geq 0$ to be the number of parts of size $k-1$ in $\pi$, and let $\pi'$ be the partition obtained from $\pi$ by removing the $\lambda_{k,\pi}$ parts of size $k-1$.  Then $\pi' \in R^{S_k}(n-\lambda_{k,\pi})$, and we observe that $\Theta(\pi')$ is well-defined if and only if $n \geq k+2+\lambda_{k,\pi}$.  

Let $k\geq 2$ and $n\geq k+2$.  If $\pi\in R^{S_k^-}(n)$ has largest part $\geq k+2$ and $\lambda_\pi$ parts, then $\lambda_\pi \geq \lambda_{k,\pi} + 1$, so the perimeter $n \geq k+2+\lambda_{k,\pi}$ and $\Theta(\pi')$ is well-defined.  Likewise, if $\pi\in R^{S_k^-}(n)$ has largest part $k+1$ and $\lambda_\pi \geq \lambda_{k,\pi} + 2$ parts, then $n=k + 2 + \lambda_{k,\pi}$ and $\Theta(\pi')$ is well-defined.  Thus the only partitions $\pi \in R^{S_k^-}(n)$ such that $\Theta(\pi')$ is not well-defined are 
\begin{align*}
\pi_0 &= (k-1)^{n-k+2}, \\
\pi_1 & = (k+1, (k-1)^{n-k-1}).
\end{align*}
The partition $\pi_1$ is the unique partition in $R^{S_k^-}(n)$ that has largest part $k+1$ and $\lambda_\pi = \lambda_{k,\pi} + 1$ parts ($n=k +1 + \lambda_{k,\pi}$), and $\pi_0$ is the unique partition in $R^{S_k^-}(n)$ that has largest part $k-1$ ($n = k -2 + \lambda_{k,\pi}$).  

Thus for $k\geq 2$ and $n\geq k+2$, we define the involution
\begin{equation}\label{eq:Theta-def}
\Theta^-: R^{S_k^-}(n) \rightarrow R^{S_k^-}(n)
\end{equation}
by setting $\Theta^-(\pi_0)=\pi_0$, $\Theta^-(\pi_1)=\pi_1$, and for all other $\pi \in R^{S_k^-}(n)$, we define $\Theta^-(\pi)$ to be the partition obtained by fixing each part of size $k-1$ and applying $\Theta$ to $\pi'$ to obtain a partition in $R^{S_k^-}(n)$.  

We now prove Theorem \ref{oneelementremoved} using the function $\Theta^-$.

\begin{proof}[Proof of Theorem \ref{oneelementremoved}]
Let $k\geq 2$ and $n\geq k-1$.  We note that if $n\leq k-2$ then $r_e^{S_k^-}(n)=r_o^{S_k^-}(n)=0$.  Moreover, the only partition in $R^{S_k^-}(k-1)$ is $k-1$, the only partition in $R^{S_k^-}(k)$ is $(k-1)^2$, and the only partitions in $R^{S_k^-}(k)$ are $k+1$ and $(k-1)^3$.  In each of these three cases, the partitions are all in $R_o^{S_k^-}(n)$ if and only if $k$ is even and in $R_e^{S_k^-}(n)$ if and only if $k$ is odd.  Thus the result holds for $n\leq k+1$.  We now suppose $n\geq k+2$.

First assume $k\geq 2$ is even.  Let $\Theta_e^-$ be the restriction of $\Theta^-$ to $R_e^{S_k^-}(n)$.  Then $\pi_0, \pi_1 \not\in R_e^{S_k^-}(n)$.  So for all $\pi\in R_e^{S_k^-}(n)$, $\Theta_e^-(\pi)$ is obtained by fixing each $k-1$ and applying $\Theta$ to $\pi'$.  Since $k$ is even and $\omega(\pi)\leq -1$, we have that $\omega(\pi') = \omega(\pi) - \lambda_{k,\pi} \leq -1$ as well.  So 
\[
\pi' \in R_e^{S_k}(n-\lambda_{k,\pi}) = \bigsqcup_{m\geq 1} R^{S_k}(n-\lambda_{k,\pi},-m).
\] 
By \eqref{eq:Thetarestriction}, we obtain
\[
\Theta(\pi') \in \bigsqcup_{m\geq 1} R^{S_k}(n-\lambda_{k,\pi}, m+1) \subseteq R_o^{S_k}(n-\lambda_{k,\pi}),
\] 
and thus $\Theta_e^-(\pi)\in R_o^{S_k^-}(n)$.  So for all $k\geq 2$ even and $n\geq k+1$, 
\[
\Theta_e^-: R_e^{S_k^-}(n) \hookrightarrow R_o^{S_k^-}(n).
\]
Moreover, $\Theta_e^-$ is not surjective since $\pi_0, \pi_1$ are in $R_o^{S_k^-}(n)$ but not in $R_e^{S_k^-}(n)$ when $k$ is even.  The result follows in this case.

We now assume $k\geq 3$ is odd, and let $\Theta_o^-$ be the restriction of $\Theta^-$ to $R_o^{S_k^-}(n)$.  Here, $\pi_0, \pi_1 \not\in R_o^{S_k^-}(n)$, so for all $\pi\in R_o^{S_k^-}(n)$, $\Theta_o^-(\pi)$ is obtained by fixing each $k-1$ and applying $\Theta$ to $\pi'$.  Since $k$ is odd and $\omega(\pi) \geq 1$, we have that $\omega(\pi') = \omega(\pi) + \lambda_{k,\pi} \geq 1$ as well.  So $\pi' \in R_o^{S_k}(n-\lambda_{k,\pi})$, and by \eqref{eq:Thetarestriction}, we obtain
\[
\Theta(\pi') \in \bigsqcup_{m\geq 1} R^{S_k}(n-\lambda_{k,\pi}, -m-1) \subseteq R_e^{S_k}(n-\lambda_{k,\pi}).
\] 
Thus $\Theta_o^-(\pi)\in R_e^{S_k^-}(n)$, and for all $k\geq 2$ even and $n\geq k+1$, 
\[
\Theta_o^-: R_o^{S_k^-}(n) \hookrightarrow R_e^{S_k^-}(n).
\]
Moreover, $\Theta_o^-$ is not surjective since now $\pi_0, \pi_1$ are in $R_e^{S_k^-}(n)$ but not in $R_o^{S_k^-}(n)$ since $k$ is odd.  The result follows.
\end{proof}

\section{Parity Bias for Partitions with Fixed Degree of Bias}\label{sec:degreeofbias}

Recall from the introduction that if $\pi \in R(n)$, then $\alpha_\pi$ denotes the largest part, $\lambda_\pi$ the number of parts, and $n=\alpha_\pi+\lambda_\pi -1$.  We prove Theorem \ref{degreeofbias} by employing a stars and bars counting technique to obtain direct formulas for $r(n,m)$ that we can then compare.  Throughout we adopt the convention that binomial coefficients $\binom{n}{k}:=0$ when $k<0$ or $n<k$. 

\begin{proof}[Proof of Theorem \ref{degreeofbias}]

We first note that as $1^n$ is the only partition with perimeter $n$ into more than $n-1$ parts, $r(n,n)=1$, while $r(n,-n)=0$.  Thus the result holds when $n=m$.  

We now consider when $n>m\geq 1$.  Then for $\pi\in R(n, \pm m)$, we must have that $\alpha_\pi \geq 2$.  We further observe that since $\lambda_\pi = \omega(\pi) +2 e(\pi)$, it follows that  $\lambda_\pi \equiv \omega(\pi) \pmod{2}$.  Thus we have $\alpha_\pi \equiv n-\omega(\pi)+1 \pmod{2}$, so 
\[
\alpha_\pi \equiv 
\begin{cases}
1 \!\!\! \pmod{2} & \text{ if } n\equiv \omega(\pi) \!\!\! \pmod{2}, \\
0 \!\!\! \pmod{2} & \text{ if } n \not\equiv \omega(\pi) \!\!\! \pmod{2}.
\end{cases}
\]
Furthermore, since $|\omega(\pi)| \leq \lambda_\pi = n - \alpha_\pi +1$, it follows that 
\begin{equation}\label{eq:nbound}
n \geq |\omega(\pi)| + \alpha_\pi -1.
\end{equation}

Using a stars and bars counting technique, the number of ways to choose $k$ elements from a set of size $N$ allowing repetition of elements is
\begin{equation}\label{eq:starsbars}
\binom{N+k-1}{k}.
\end{equation}

We use \eqref{eq:starsbars} to count the number of ways to construct an element of $R(n,m)$ with fixed largest part $\alpha$ by choosing the additional $\lambda -1 = n - \alpha$ parts separately, based on whether they are odd or even.  Let $A_1$ denote the number of additional odd parts, and $A_2$ the number of additional even parts.  Then since $A_1+A_2= n - \alpha$ and 
\[
A_1-A_2 = 
\begin{cases} 
m-1 & \text{ if $\alpha$ odd}, \\ 
m+1 & \text{ if $\alpha$ even}, 
\end{cases}
\]
it follows that 
\[
A_1 = \begin{cases} \frac{n + m -\alpha -1}{2} & \text{ if $\alpha$ odd}, \\ \frac{n + m -\alpha +1}{2} & \text{ if $\alpha$ even}, \end{cases}
\]
and 
\[
A_2 = \begin{cases} \frac{n - m -\alpha +1}{2} & \text{ if $\alpha$ odd}, \\ \frac{n -m -\alpha -1}{2} & \text{ if $\alpha$ even}. \end{cases}
\]
If $\alpha$ is even, then $m<\lambda$ and $n\not\equiv m \pmod{2}$.  Thus with \eqref{eq:nbound},
\begin{equation}\label{eq:nbdds}
n\geq
\begin{cases}
m+\alpha -1 & \text{ if $\alpha$ odd}, \\
m+\alpha +1 & \text{ if $\alpha$ even},
\end{cases}
\end{equation}
and it follows that $A_1, A_2 \geq 0$.

Since the number of odd and even positive integers up to $\alpha$ is $\lfloor \frac{\alpha +1}{2} \rfloor$ and $\lfloor \frac{\alpha}{2} \rfloor$, respectively, \eqref{eq:starsbars} gives that the total number of partitions in $R(n,m)$ with largest part $\alpha$ is
\begin{align}
\binom{\frac{n+m}{2} -1}{\frac{n+ m}{2} - \frac{\alpha +1}{2}} \binom{\frac{n-m}{2} -1}{\frac{n- m}{2} - \frac{\alpha -1}{2}} &  \text{ if $\alpha$ odd}, \label{eq:choose1} \\ 
\binom{\frac{n+m+1}{2}-1}{\frac{n + m +1}{2} - \frac{\alpha}{2}} \binom{\frac{n-m-1}{2} -1}{\frac{n - m -1}{2} - \frac{\alpha}{2}} & \text{ if $\alpha$ even}. \label{eq:choose2} 
\end{align}
Thus we obtain formulas for $r(n,m)$ depending on whether (or not) $n$ and $m$ have the same parity by summing over all possible $\alpha = 2k+1 \leq n-m+1$ (or $\alpha=2k \leq n-m-1$) according to \eqref{eq:nbdds}.  Since we have established that $\alpha \geq 2$, this gives that
\begin{equation*}\label{eq:r(n,m)}
r(n,m) = 
\begin{cases}
\displaystyle \sum_{k=1}^{\frac{n-m}{2}}  \binom{\frac{n+m}{2} -1}{\frac{n+ m}{2} -k -1} \binom{\frac{n-m}{2} -1}{\frac{n- m}{2} -k} & \text{ if } n\equiv m \!\!\! \pmod{2}, \\
\displaystyle \sum_{k=1}^{\frac{n-m-1}{2}} \binom{\frac{n+m+1}{2}-1}{\frac{n + m +1}{2} - k} \binom{\frac{n-m-1}{2} -1}{\frac{n - m -1}{2} - k} & \text{ if } n\not\equiv m \!\!\! \pmod{2}.
\end{cases}
\end{equation*}
Similarly, the total number of partitions in $R(n,-m)$ with largest part $\alpha$ is obtained from \eqref{eq:choose1} and \eqref{eq:choose2} by replacing $m$ with $-m$,\begin{align*}
\binom{\frac{n-m}{2} -1}{\frac{n- m}{2} - \frac{\alpha +1}{2}} \binom{\frac{n+m}{2} -1}{\frac{n+ m}{2} - \frac{\alpha -1}{2}} &  \text{ if $\alpha$ odd}, \\ 
\binom{\frac{n-m+1}{2}-1}{\frac{n - m +1}{2} - \frac{\alpha}{2}} \binom{\frac{n+m-1}{2} -1}{\frac{n + m -1}{2} - \frac{\alpha}{2}} & \text{ if $\alpha$ even}. 
\end{align*}
In this case, if $\alpha$ is odd, then $m<\lambda$ and $n\equiv m \pmod{2}$.  Thus with \eqref{eq:nbound},
\begin{equation}\label{eq:nbdds}
n\geq
\begin{cases}
m+\alpha +1 & \text{ if $\alpha$ odd}, \\
m+\alpha -1 & \text{ if $\alpha$ even}.
\end{cases}
\end{equation}
We thus have that
\begin{equation*}
r(n,-m) = 
\begin{cases}
\displaystyle \sum_{k=1}^{\frac{n-m-2}{2}}  \binom{\frac{n-m}{2} -1}{\frac{n-m}{2} -k -1} \binom{\frac{n+m}{2} -1}{\frac{n+m}{2} -k} & \text{ if } n\equiv m \!\!\! \pmod{2}, \\
\displaystyle \sum_{k=1}^{\frac{n-m+1}{2}} \binom{\frac{n-m+1}{2}-1}{\frac{n - m +1}{2} - k} \binom{\frac{n+m-1}{2} -1}{\frac{n + m -1}{2} - k} & \text{ if } n\not\equiv m \!\!\! \pmod{2}.
\end{cases}
\end{equation*}

Using the fact that $\binom{a}{b+1} = \frac{a-b}{b+1}\binom{a}{b}$, it follows that when $n\equiv m \pmod{2}$ and $k<\frac{n-m}{2}$, the $k$th summand of $r(n,m)$ is equal to $(\frac{n+m}{2} -k)/(\frac{n-m}{2} - k) >1$ times the $k$th summand of $r(n,m)$.  Thus the result follows when $n \equiv m \pmod{2}$.  

Similarly, using the fact that $\binom{a+1}{b+1} = \frac{a+1}{b+1}\binom{a}{b}$, it follows that when $n \not\equiv m \pmod{2}$, the $k$th summand of $r(n,m)$ is equal to 
\begin{equation}\label{eq:factor}
\left( \frac{\frac{n-m+1}{2} -k}{\frac{n-m+1}{2} - 1} \right) \left( \frac{\frac{n+m+1}{2} -1}{\frac{n+m+1}{2} - k} \right)
\end{equation}
times the $k$th summand of $r(n,m)$.  When  $x>y\geq 1$ it follows that $\frac{x+m}{y+m} < \frac{x}{y}$.  Thus, for $k=1$ \eqref{eq:factor} is $1$, but when $k>1$, 
\[
\left( \frac{\frac{n+m+1}{2} -1}{\frac{n+m+1}{2} - k} \right) < \left( \frac{\frac{n-m+1}{2} -1}{\frac{n-m+1}{2} - k} \right),  
\]
so \eqref{eq:factor} is $<1$.  Since $r(n,-m)$ has one more summand than $r(n,m)$, the result follows when $n \not\equiv m \pmod{2}$ which completes the proof.
\end{proof}

\section{Analogues of PED and POD Partitions}\label{sec:redrod}
  
We can use the same approach of restricting the functions $\Theta$ and $\Theta^-$ as in the proofs of Theorems \ref{restrictedparts} and \ref{oneelementremoved} in Section \ref{sec:restrictionofallowedparts} to prove Theorems \ref{restrictedpartsdist} and \ref{oneelementremoveddist}.  Let $RED^{S_k}(n)$ and $ROD^{S_k}(n)$ denote the sets of partitions counted by $red^{S_k}(n)$ and $rod^{S_k}(n)$, respectively.  
   
\begin{proof}[Proof of Theorem \ref{restrictedpartsdist}]
We first observe that when $n=k+2$, the only partitions in $R^{S_k}(k+2)$ are $k+2$ and $(k+1, k+1)$.  Since $k+2$ has distinct parts it is in both $RED^{S_k}(n)$ and $ROD^{S_k}(n)$.  However, $(k+1, k+1)$ is in $RED^{S_k}(n)$ if and only if $k$ is even, and in $ROD^{S_k}(n)$ if and only if $k$ is odd.  Thus we obtain the desired inequality when $n=k+2$.  

We also observe that when $n=k+3$, the partitions in $R^{S_k}(k+3)$ are $k+3$, $(k+2, k+2)$, $(k+2, k+1)$, and $(k+1, k+1, k+1)$.  Since $k+2$ and $k+1$ are of opposite parity, we have $red^{S_k}(k+3)=rod^{S_k}(k+3)=3$ in this case.  When $n=k+1$ the partition $k+1$ gives that $red^{S_k}(n)=rod^{S_k}(n)=1$.  Moreover, if $n\leq k$, we trivially have $red^{S_k}(n)=rod^{S_k}(n)=0$.

We now assume $n\geq k+4$.  Since $\Theta$ changes the parity of parts (and adds or removes one part of size $k+1$), it follows that
\begin{align*}
\Theta(ROD^{S_k}(n)) \subseteq RED^{S_k}(n) & \text{ if $k$ even}, \\
\Theta(RED^{S_k}(n)) \subseteq ROD^{S_k}(n) & \text{ if $k$ odd}.
\end{align*}
Thus since $\Theta$ is injective, it follows that $red^{S_k}(n) \geq rod^{S_k}(n)$, when $k$ even, and $rod^{S_k}(n) \geq red^{S_k}(n)$, when $k$ odd.

To see that these inequalities are strict, consider the partition $\pi=(n-1,k+2)$.  Then as $n\geq k+4$, we have that $\pi \in RED^{S_k}(n) \cap ROD^{S_k}(n)$.  However, $\pi \in R^{S_{k+1}}(n)$, so $\Theta^{-1}(\pi) = \Theta(\pi)=\psi_{k+1}(\pi)=(n-2, k+1, k+1)$.  Thus when $k$ is even $\Theta^{-1}(\pi) \not\in ROD^{S_k}(n)$, and when $k$ is odd, $\Theta^{-1}(\pi) \not\in RED^{S_k}(n)$.  Thus in both cases $\Theta$ is not surjective.  

\end{proof}

\begin{proof}[Proof of Theorem \ref{oneelementremoveddist}]

We first observe that when $n=k$, the only partition in $R^{S_k^-}(k)$ is $(k-1)^2$, which is in $RED^{S_k^-}(n)$ if and only if $k$ is even, and in $ROD^{S_k^-}(n)$ if and only if $k$ is odd.  Thus the inequality holds for $n=k$.  When $n=k+1$, the only partitions in $R^{S_k^-}(k+1)$ are $k+1$ and $(k-1)^3$.  Since $k+1$ has distinct parts it is in both $RED^{S_k^-}(n)$ and $ROD^{S_k^-}(n)$.  However, $(k-1)^3$ is in $RED^{S_k^-}(n)$ if and only if $k$ is even, and in $ROD^{S_k^-}(n)$ if and only if $k$ is odd.  Thus we obtain the desired inequality when $n=k+1$.  Similarly, when $n=k+2$,  
\[
R^{S_k^-}(k+2) = \{ k+2, (k+1)^2, (k+1, k-1), (k-1)^4 \}.
\] 
Two of these have distinct parts, and the other two are in $RED^{S_k^-}(n)$ if and only if $k$ is even, and in $ROD^{S_k^-}(n)$ if and only if $k$ is odd.  Thus the desired inequality follows when $n=k+2$. 

We also observe that when $n=k-1$, the only partition in $R^{S_k^-}(k-1)$ is $k-1$, which has distinct parts, so $red^{S_k^-}(k-1)=rod^{S_k^-}(k-1)=1$ in this case.  If $n\leq k-2$, we trivially have $red^{S_k^-}(n)=rod^{S_k^-}(n)=0$.

We now assume $n\geq k+3$.  In this case $\pi_0$ and $\pi_1$ both contain repeated parts of size $k-1$.  So if $k$ is even, $\pi_0$ and $\pi_1$ belong to $RED^{S_k^-}(n)$ but not $ROD^{S_k^-}(n)$, so for any $\pi \in ROD^{S_k^-}(n)$, we have that $\Theta^-(\pi)$ is obtained by fixing the parts of size $k-1$ (which is odd, so there can be at most one such part) and applying $\Theta$ to $\pi'$.  Since the odd parts of $\pi$ are distinct and $\Theta$ changes the parity of the parts $\geq k+1$ (and adds or removes one odd part of size $k+1$), this results in a partition with distinct even parts. Similarly, if $k$ is odd, then $\pi_0$ and $\pi_1$ belong to $ROD^{S_k^-}(n)$ but not $RED^{S_k^-}(n)$, and for any $\pi \in RED^{S_k^-}(n)$, it follows that $\Theta^-(\pi)$ has distinct odd parts.  Thus,
\begin{align*}
\Theta^-(ROD^{S_k^-}(n)) \subsetneq RED^{S_k^-}(n) & \text{ if $k$ even}, \\
\Theta^-(RED^{S_k^-}(n)) \subsetneq ROD^{S_k^-}(n) & \text{ if $k$ odd}.
\end{align*}
Since $\Theta^-$ is injective, the desired inequalities follow.
\end{proof}

\subsection{Fibonacci Recurrences}  \label{Fib}

Recall that $rd(n)$ counts the number of partitions with perimeter $n$ into distinct parts, and $F_n$ is the $n$th Fibonacci number.  Let $RD(n)$ denote the set of all partitions into distinct parts with perimeter $n$.  Straub \cite{straub2016core} showed that $rd(n) = F_n$ by the following method.  For $n\geq 3$ he splits 
\begin{equation}\label{RDsplit}
RD(n) = RD_1(n) \sqcup RD_2(n),
\end{equation} 
where $RD_1(n)$ consists of those partitions in $RD(n)$ whose largest part is exactly $1$ more than the next largest (there must be at least two parts), and $RD_2(n)$ consists of those whose largest part is at least $2$ greater than the next largest (or is the unique partition with perimeter $n$ into one part).  

He establishes a bijection $\varphi$ between $RD_1(n)$ and the partitions counted by $rd(n-2)$ by removing the largest part of $\pi \in RD_1(n)$ (or for $\varphi^{-1}$, adding a new largest part of size one greater), and a bijection $\psi$ between $RD_2(n)$ and the partitions counted by $rd(n-1)$ by reducing the size of the largest part by $1$ (or for $\psi^{-1}$, increasing the size of the largest part by $1$).  With \eqref{RDsplit}, this gives the Fibonacci recurrence relation for $n\geq 3$,
\[
rd(n) = rd(n-1) + rd(n-2).
\]
Then a simple induction argument establishes that $rd(n)=F_n$ for all $n\geq 1$.

For fixed $k\geq 0$, recall $rd^S(n)$ counts the number of partitions with perimeter $n$ into distinct parts not in $S$.  Let $RD^S(n)$ denote the set of partitions counted by $rd^S(n)$.  We show that Straub's proof above extends to prove \eqref{Straubextn} as well.  For $k\geq 0$ and $n\geq k+3$, write
\begin{equation}\label{RDasplit}
RD^{S_k}(n) = RD_1^{S_k}(n) \sqcup RD_2^{S_k}(n),
\end{equation} 
where $RD_1^{S_k}(n)$ and $RD_2^{S_k}(n)$ denote the subsets of $RD_1(n)$ and $RD_2(n)$ whose partitions only contain parts $\geq k+1$.  Then the function $\varphi_k$ obtained by restricting $\varphi$ to $RD_1^{S_k}(n)$ is a clearly bijection onto the set of partitions counted by $rd^{S_k}(n-2)$ as the smallest part is never affected.  The function $\psi_k$ obtained by restricting $\psi$ to $RD_2^{S_k}(n)$ is also a bijection onto the set of partitions counted by $rd^{S_k}(n-1)$.  This is because the only time the smallest part is reduced is in the case of the unique partition with perimeter $n$ into one part, and since $n\geq k+3$ reducing the size by $1$ does not create any issues.  These bijections yield the Fibonacci recurrence relation for $n\geq k+3$,
\[
rd^{S_k}(n) = rd^{S_k}(n-1) + rd^{S_k}(n-2).
\]
A simple induction argument then establishes \eqref{Straubextn}, noting that we induct on $m=n-k-1$ and use the base cases $m=0$ ($n=k+1$) and $m=1$ ($n=k+2$).  

We further observe that if $rd_k(n)$ counts the number of partitions with perimeter $n$ into distinct parts with smallest part equal to $k$, then for $k\geq 1$,
\[
rd_k(n)= rd^{S_{k-1}}(n) - rd^{S_k}(n) = F_{n-k+1} - F_{n-k}.
\]  
We thus obtain the following refinement of \eqref{Straubextn}.  For $k\geq 1$ and $n\geq k+2$,
\begin{equation}\label{Straubextnrefinement}
rd_k(n) = F_{n-k-1}.
\end{equation}

\subsection{Tribonacci Recurrences}  \label{Trib}   

We now use a similar approach to that in Section \ref{Fib} to prove the following Tribonacci recurrence relations for $red(n)$ and $rod(n)$.  For $n\geq 4$,
\begin{align}
red(n) &= red(n-1) + red(n-2) + red(n-3), \label{redrecurrence} \\
rod(n) &= rod(n-1) + rod(n-2) + rod(n-3). \label{rodrecurrence}
\end{align} 

Equations \eqref{redrecurrence} and \eqref{rodrecurrence} provide another method to deduce Corollary \ref{redrod}, as we will observe below.  We note that \eqref{redrecurrence} and \eqref{rodrecurrence} can also be obtained using generating functions, which is described in \cite[Thm. 5.9]{reu24proceedings}.  Moreover, explicit counting functions for $red(n)$ and $rod(n)$ are given in \cite[Thm. 5.7]{reu24proceedings} which provide yet another method for deducing Corollary \ref{redrod}.

\begin{proof}[Proof of \eqref{redrecurrence} and \eqref{rodrecurrence}]

Let $RED(n)$, $ROD(n)$ denote the sets of partitions counted by $red(n)$, $rod(n)$, respectively.   Write
\begin{align}
RED(n)= RED_1(n) \sqcup RED_2(n) \sqcup RED_3(n), \label{REDsplit} \\
ROD(n)= ROD_1(n) \sqcup ROD_2(n) \sqcup ROD_3(n), \label{RODsplit}
\end{align}
where 
\begin{align*}
RED_1(n) &= \{\pi \in RED(n) \mid 1 \text{ is a part} \}, \\
RED_2(n) &= \{\pi \in RED(n) \mid 1 \text{ is not a part but } 2 \text{ is}\}, \\
RED_3(n) &= \{\pi \in RED(n) \mid \text{neither } 1 \text{ nor } 2 \text{ is a part} \},
\end{align*}
and 
\begin{align*}
ROD_1(n) &= \{\pi \in ROD(n) \mid 2 \text{ is a part} \}, \\
ROD_2(n) &= \{\pi \in ROD(n) \mid 2 \text{ is not a part but } 1 \text{ is} \}, \\
ROD_3(n) &= \{\pi \in ROD(n) \mid \text{neither } 1 \text{ nor } 2 \text{ is a part} \}.
\end{align*}

Let $n\geq 4$.  We can define a bijection $\varphi_e$ from $RED_1(n)$ to the partitions counted by $red(n-1)$ by removing a part of size $1$ (or for $\varphi_e^{-1}$, by appending a part of size $1$).  This reduces (or increases) the perimeter by exactly $1$ while maintaining the distinctness of even parts, and as the mapping is invertible it is a bijection.  Similarly, we define the bijection $\varphi_o$ from $ROD_1(n)$ to the partitions counted by $rod(n-1)$ by removing a part of size $2$ (or for $\varphi_o^{-1}$, by appending a part of size $2$).  

We next define a bijection $\psi_e$ from $RED_2(n)$ to the set of partitions counted by $red(n-3)$ by removing the unique part of size $2$ and then reducing all other parts by $2$.  Reducing parts by $2$ maintains their parity, and this mapping reduces the perimeter by $3$.  Moreover, $\psi_e$ is easily invertible by increasing all parts by $2$ and then appending a part of size $2$.  Similarly, we define the bijection $\psi_o$ from $ROD_2(n)$ to the set of partitions counted by $rod(n-3)$ by removing the unique part of size $1$ and then reducing all other parts by $2$. 

Finally, we define a bijection $\theta_e$ from $RED_3(n)$ to the set of partitions counted by $red(n-2)$ by simply reducing all parts by $2$.  This maintains the parity of each part and reduces the perimeter by $2$.  For the inverse, simply increase all parts by $2$.  Similarly, the same mapping defines a bijection $\theta_o$ from $ROD_3(n)$ to the set of partitions counted by $rod(n-2)$. 

Together with \eqref{REDsplit} and \eqref{RODsplit}, this proves \eqref{redrecurrence} and \eqref{rodrecurrence}.  

\end{proof}

Considering the first three values of $red(n)$ and $rod(n)$ gives us an additional proof of Corollary \ref{redrod}.

\begin{proof}[Tribonacci Proof of Corollary \ref{redrod}]
Observe that $red(1)=rod(1)=1$, as $1$ is the only partition with perimeter $1$ and it has distinct parts.  However, $red(2)=2$ and $rod(2)=1$, since both partitions with perimeter $2$ have distinct even parts but only one has distinct odd parts.  Additionally, $red(3)=rod(3)=3$ as the partitions with perimeter $3$ are $3$, $2+1$, $2+2$, and $1+1+1$, and for each parity there is only one with a repeated part of that parity.  

Thus with these base cases we obtain from \eqref{redrecurrence} and \eqref{rodrecurrence} that $red(n) > rod(n)$ for all $n \geq 4$ and $n=2$, and $red(n)=rod(n)$ for $n=1,3$. 
\end{proof}

\end{document}